\date{\today, version 1.20}
  \newtheorem{theorem}{Theorem}[section]
  \newtheorem{lemma}[theorem]{Lemma}
  \newtheorem{proposition}[theorem]{Proposition}
  \newtheorem{corollary}[theorem]{Corollary}
  \newtheorem{conjecture}[theorem]{Conjecture}
  \newtheorem{definition}[theorem]{Definition}
  \newtheorem{example}[theorem]{Example}
\newtheorem{remark}[theorem]{Remark}
\theoremstyle{remark}
\numberwithin{equation}{section}
\begin{document}

\title[The Mukai-type conjecture]{Effective non-vanishing and the Mukai-type conjecture}

\author[Y. Gongyo]{Yoshinori Gongyo}
\address{Graduate School of Mathematical Sciences, The University of Tokyo, 3-8-1 Komaba,
Meguro-ku, Tokyo, 153-8914, Japan.}
\email{gongyo@ms.u-tokyo.ac.jp}

\subjclass[2020]{Primary 14J45; 
Secondary 14E30}

\maketitle

\begin{abstract}We introduce some invariants of Fano varieties and propose a Mukai-type conjecture which characterizes the product of projective spaces.  Moreover, we prove that the Ambro--Kawamata effective non-vanishing conjecture implies the Mukai-type conjecture. 

\end{abstract}

\setcounter{tocdepth}{1} 
\tableofcontents

\section{Introduction}Throughout this article, we will work over the field of complex numbers $\mathbb C$. 
We will make use of the standard notation as in \cite{kamama}, \cite{ko},
\cite{bchm}, and \cite{komo}. 
In this short article, we study the characterization of the product of the projective spaces for Fano varieties. The following is a very famous conjecture by Shigeru Mukai:

\begin{conjecture}[{\cite{mukai}}]\label{mukai conjecture}Let $X$ be a $d$-dimensional smooth Fano varieties,  $\rho(X)$ the Picard number, and 
$$i_X:= \max\{r \in \mathbb{ Z}| -K_X \sim_{\mathbb{Z}} r H \text{ for some Cartier divisor $H$} \}$$ be the Fano index. Then it holds that 

$$d+\rho(X)-i(X) \cdot \rho(X) \geq 0.
$$
Moreover the above is equal if and only if $X \simeq \mathbb{P}^{i(X)-1} \times \cdots  \times \mathbb{P}^{i(X)-1}. $
\end{conjecture}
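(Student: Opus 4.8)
The plan is to deduce Conjecture~\ref{mukai conjecture} from the Ambro--Kawamata effective non-vanishing conjecture, treating the inequality and the rigidity (equality) statement in turn. First I would reformulate: since $-K_X\sim_{\mathbb Z} i_X H$ forces $H$ to be an ample Cartier divisor (as $-K_X$ is ample and $i_X\geq 1$), the asserted bound $d+\rho(X)-i_X\cdot\rho(X)\geq 0$ is literally $d\geq \rho(X)\,(i_X-1)$. The elementary observation driving everything is that for any rational curve $C\subset X$ one has $-K_X\cdot C=i_X\,(H\cdot C)\geq i_X$, so the pseudo-index $\iota_X:=\min\{-K_X\cdot C : C \text{ rational curve}\}$ satisfies $\iota_X\geq i_X$ and hence $\rho(X)(i_X-1)\leq\rho(X)(\iota_X-1)$. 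Thus both the inequality and---by squeezing---its equality case reduce to the pseudo-index (``generalized Mukai'') form, which is the version amenable to Mori theory.

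For the inequality itself I would argue by induction on $\rho(X)$ using the Mori cone. Being Fano, $X$ has rational polyhedral $\overline{NE}(X)$ spanning $N_1(X)_{\mathbb R}$, so it carries at least $\rho(X)$ extremal rays $R_j$, each of length $\ell(R_j)\geq \iota_X\geq i_X$. Contracting such a ray by $\varphi_j\colon X\to Y_j$ and invoking the Ionescu--Wi\'sniewski fiber-dimension inequality, every fiber has dimension at least $\ell(R_j)-1\geq i_X-1$; a careful induction keeping track of $\rho(X)$ mutually independent contractions then assembles these into the bound $d\geq\rho(X)(i_X-1)$. Effective non-vanishing is what keeps the induction under control: applying it to $D=H$ (and to its multiples), for which $D-K_X=(1+i_X)H$ is ample, hence nef and big, guarantees that the fundamental linear system and the systems appearing on the successive contraction targets are nonempty, so that the requisite divisors and fibration structures actually exist.

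The main obstacle is the equality case $d=\rho(X)(i_X-1)$, where the geometric rigidity $X\simeq(\mathbb{P}^{i_X-1})^{\rho(X)}$ must be extracted. In this situation all the estimates above are forced to be simultaneous equalities: $\iota_X=i_X$, each elementary contraction is of fiber type, and every fiber attains the minimal dimension $i_X-1$ with the minimal anticanonical degree, which pins it down as $\mathbb{P}^{i_X-1}$ and makes each $\varphi_j$ a $\mathbb{P}^{i_X-1}$-bundle. The delicate point is to show that these $\rho(X)$ bundle structures are mutually transverse and globally split $X$ as a product rather than merely an iterated bundle; here effective non-vanishing re-enters decisively, supplying the sections of $H$ that realize the projection morphisms $X\to\mathbb{P}^{i_X-1}$ onto each factor. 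Combining these morphisms into a single map and checking it is an isomorphism compatible with $-K_X\sim i_X H$ would finish the proof. I expect this splitting/rigidity step, rather than the numerical inequality, to be where the real work lies.
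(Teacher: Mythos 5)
The statement you are asked about is Conjecture~\ref{mukai conjecture}, which the paper records as an \emph{open conjecture} of Mukai and does not prove. The paper's actual theorem (Theorem~\ref{K implies M}) shows that effective non-vanishing implies the \emph{different} Conjecture~\ref{gamma Mukai conjecture}, in which $\rho(X)\cdot i(X)$ is replaced by the total index $\tau_X$; Examples~\ref{eg1} and~\ref{eg2} show that $\tau_X$ and $\rho(X)\, i(X)$ are incomparable in general, so that result neither implies nor is implied by the statement you set out to prove. Your proposal is therefore attempting something the paper explicitly does not claim.

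As a proof attempt in its own right, the proposal has genuine gaps at exactly the points where the real difficulty lies. The reduction to the pseudo-index version is fine, but the ``careful induction keeping track of $\rho(X)$ mutually independent contractions'' that is supposed to assemble the fiber-dimension bounds into $d\geq\rho(X)(i_X-1)$ is precisely the open content of the generalized Mukai conjecture: the Ionescu--Wi\'sniewski inequality controls one extremal contraction at a time, and making $\rho(X)$ such contributions additive is known only in low dimension or under strong extra hypotheses (cf.\ \cite{w-mukai}, \cite{bcdd}). You do not explain how effective non-vanishing closes this gap --- asserting that $H^0$ of multiples of $H$ is nonzero does not by itself produce the fibration structures or the independence of the contractions. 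The rigidity step is likewise only a wish list: even granting that each elementary contraction is a $\mathbb{P}^{i_X-1}$-bundle, the splitting of an iterated bundle into a product is a substantive step for which no argument is given. By contrast, the paper's route to its (different) theorem is to use effective non-vanishing to build reduced divisors $S_{i,j}\sim_{\mathbb Z} L_i$ with $(X,\sum S_{i,j})$ log canonical (Proposition~\ref{choice of lc boundary}) and then invoke the complexity characterization of toric pairs \cite{bmsz} together with \cite[Proposition 5.3]{fs}; none of that machinery appears in your sketch, and it is not clear it could be adapted to the Fano-index formulation.
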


The classical approach to the Mukai conjecture is to study the geometry of rational curves cf. \cite{w-mukai}, \cite{bcdd}...  In fact, in the series of these studies, they prove some generalization of the above Mukai conjecture, where Fano index is replaced by the {\it pseudo index} which is the minimal length of rational curves on the variety. 

 In this article, we introduce some invariants of Fano varieties and propose the new Mukai-type conjecture from the motivation of the Mukai conjecture. We call the invariant the {\it total index}, which is  the maximal length of nef-partitions on Toric geometry that appeared in \cite{ba}, \cite{bn}
(cf. Section \ref{total indices:section}). The total index coincides with the Fano index when the Picard number is one. We propose the new conjecture by replacing $\rho(X) \cdot i(X)$ with the total index $\tau_X$ in Conjecture \ref{mukai conjecture} as follows:

 \begin{conjecture}\label{gamma Mukai conjecture}Let $X$ be a $d$-dimensional smooth Fano varieties,  $\rho(X)$ the Picard number, and $\tau_X$ be the total index in Definition \ref{total-index}. Then it holds that 
 
 $$\mathrm{dim}\,X + \rho(X) - \tau_X \geq 0 $$ 
 and the equality holds if and only if the product of projective spaces, i.e. $X \simeq \mathbb{P}^{d_1} \times \cdots  \times \mathbb{P}^{d_{\rho(X)}}$ for some $d_1, d_2, \dots, d_{\rho(X)} \in \mathbb{Z}_{>0}.$
 
 \end{conjecture}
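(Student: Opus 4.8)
The plan is to combine the description of $\tau_X$ via nef partitions with the Ambro--Kawamata effective non-vanishing conjecture, the latter being used only to pass from numerical nef pieces to honest effective divisors. Fix a nef partition $-K_X = L_1 + \cdots + L_m$ realizing $m = \tau_X$, where each $L_i$ is a nonzero nef Cartier divisor. For any partial sum $D = \sum_{i \in S} L_i$ the divisor $D - K_X = D + \sum_i L_i$ is the sum of a nef and an ample divisor, hence nef and big, so effective non-vanishing yields $h^0(X, \mathcal{O}_X(D)) > 0$. In particular every $L_i$, and every partial sum, is linearly equivalent to an effective divisor. This is the one point at which the non-vanishing hypothesis is invoked, and it is what makes the geometry below available.

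For the inequality $\dim X + \rho(X) - \tau_X \ge 0$ I would argue by descending induction along a tower of Mori contractions, set in the category of $\mathbb{Q}$-factorial klt Fano varieties; this is forced because the base of a fibre-type contraction from a smooth variety may be singular, and it is also the natural home of the Ambro--Kawamata conjecture. A $K_X$-negative extremal contraction of fibre type $f : X \to Y$ has general fibre $F$ of Picard number one with $-K_X|_F = -K_F$, so restricting the partition gives $\sum_i (L_i|_F) = -K_F$ with $L_i|_F = a_i H_F$, each $a_i \in \mathbb{Z}_{\ge 0}$ and $H_F$ the ample generator; the number of fibre-positive pieces is therefore at most $\sum_i a_i = i_F \le \dim F + 1$. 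The fibre-trivial pieces are pullbacks from $Y$ and assemble into a nef partition on $Y$, whose dimension and Picard number drop by $\dim F$ and $1$; feeding in the inductive bound $\dim Y + \rho(Y) - \tau_Y \ge 0$ and adding the fibre-positive pieces recovers $\dim X + \rho(X) - \tau_X \ge 0$. The base case $\rho(X) = 1$ is the identity $\tau_X = i_X$ together with $i_X \le \dim X + 1$.

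For the equality characterization, one checks that equality propagates down the tower: it forces $i_F = \dim F + 1$ at every stage, so each general fibre is a projective space (by Kobayashi--Ochiai in the smooth case), and it forces the induced partition on each base to remain of maximal length. The base case $\rho(X) = 1$ then gives $X \cong \mathbb{P}^{\dim X}$ and anchors a descending induction on $\rho(X)$. What remains is to promote each projective-space \emph{fibration} to a \emph{product}: the effective fibre-positive pieces supplied by effective non-vanishing should define a morphism $X \to \mathbb{P}^{\dim F}$ splitting $f$, giving $X \cong \mathbb{P}^{\dim F} \times Y$ and closing the induction to $X \cong \mathbb{P}^{d_1} \times \cdots \times \mathbb{P}^{d_{\rho(X)}}$.

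I expect this final rigidity step to be the main obstacle, namely ruling out twisted projective bundles and showing that a maximal-length nef partition forces an honest product decomposition rather than a nontrivial fibration. Two features concentrate the difficulty. First, the descent used for the inequality must be sharpened so that, at equality, the fibre-trivial pieces form a nef partition of $-K_Y$ exactly rather than of some larger anticanonical-type class, and controlling this correction term is delicate. Second, and decisively, one must use the global sections furnished by effective non-vanishing to construct the second projection and trivialize the bundle, which is precisely where the Ambro--Kawamata conjecture does the essential work and where a purely numerical argument cannot suffice.
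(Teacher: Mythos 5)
Your route is genuinely different from the paper's, but as written it has gaps that you yourself flag and do not close, so it does not constitute a proof. In the descent along Mori contractions there are two problems already for the inequality: a smooth Fano variety need not admit any fibre-type extremal contraction (there are Fano threefolds of Picard number two both of whose extremal contractions are divisorial), and even when one exists the fibre-trivial pieces assemble into a nef decomposition of $-K_X$ minus the fibre-positive part, which is \emph{not} $f^*(-K_Y)$ in general: the canonical bundle formula introduces a correction term on $Y$, and $Y$ need not be smooth, so $\tau_Y$ is not even defined in the sense of Definition \ref{total-index}. More decisively, the step you defer --- upgrading a $\mathbb{P}^{\dim F}$-fibration to a product, i.e.\ ruling out twisted projective bundles --- is exactly the hard rigidity statement, and ``the effective pieces should define a morphism splitting $f$'' is not an argument: effective non-vanishing supplies one section of $L_i$, which gives at best a divisor meeting each fibre in a hyperplane; to get a second projection to $\mathbb{P}^{\dim F}$ you would need $f_*L_i$ to be trivialized by global sections, which is precisely what is in question.

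The paper's mechanism is entirely different and sidesteps all of this. Effective non-vanishing is used, not merely for effectivity, but through an lc-threshold and minimal-lc-centre ladder argument with the vanishing theorem (Proposition \ref{choice of lc boundary}) to produce \emph{reduced} divisors $S_{i,j}\sim_{\mathbb{Z}} L_i$ with $(X,\sum_{i,j}S_{i,j})$ log canonical and with $\tau_X$ components in total. The inequality and the structure at equality then both follow in one stroke from the complexity theorem of \cite{bmsz}: the complexity $\dim X+\rho(X)-\tau_X$ is non-negative, and at equality $(X,\sum S_{i,j})$ is toric with every torus-invariant divisor occurring among the $S_{i,j}$, hence every effective divisor on $X$ is nef and $X$ is a product of projective spaces by \cite{fs}. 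If you wish to salvage your approach, the missing input is a global rigidity theorem of exactly this kind; the fibrewise Kobayashi--Ochiai argument alone cannot distinguish $\mathbb{P}^n\times Y$ from a nontrivial $\mathbb{P}^n$-bundle over $Y$.
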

 
 Note that Conjectures \ref{mukai conjecture} and  \ref{gamma Mukai conjecture} coincide with the Kobayashi--Ochiai theorem \cite{kooc} when the Picard number is one. 
 
 The main ingredient of this article is to prove Conjecture \ref{gamma Mukai conjecture} under assuming that the following Ambro--Kawamata effective non-vanishing conjecture is true:
 
 \begin{conjecture}\label{effective non-vanishing}Let $(X,B)$ be a projective klt pair and $L$ a nef line bundle on $X$ such that $L-(K_X+B) $ is ample. 

Then $H^0(X, L) \not =0$. 

\end{conjecture}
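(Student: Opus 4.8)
The plan is to reduce the non-vanishing to the positivity of an Euler characteristic and then to argue by induction on $d=\dim X$. Since $A:=L-(K_X+B)$ is ample and $(X,B)$ is klt, writing $L=K_X+B+A$ and applying Kawamata--Viehweg vanishing gives $H^i(X,\mathcal{O}_X(L))=0$ for all $i>0$; hence $h^0(X,L)=\chi(X,L)$. As $\chi(X,L)$ is then a non-negative integer, the assertion $H^0(X,L)\neq 0$ becomes equivalent to the numerical statement $\chi(X,L)\geq 1$. The case $L\sim_{\mathbb{Z}}0$ is immediate: then $-(K_X+B)=A-L$ is ample, so $(X,B)$ is a klt log Fano pair and the same vanishing yields $\chi(X,\mathcal{O}_X)=1$. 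This, together with the classically known low-dimensional cases $d\leq 2$, anchors the induction.

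For the inductive step I would distinguish cases by the numerical behaviour of $L$. When $L$ is big, the natural route is the non-vanishing machinery of Shokurov and Kawamata, refined to recover a section of $L$ itself rather than of a large multiple: by Kodaira's lemma one writes $L\equiv_{\mathbb{Q}}A'+E$ with $A'$ ample and $E$ effective, perturbs the boundary to $B'=B+\varepsilon E$ so as to produce a non-klt centre $W\subsetneq X$ of $(X,B')$ with $L-(K_X+B')$ still ample, and then applies Kawamata--Viehweg (Nadel) vanishing to the exact sequence defining the multiplier ideal of $(X,B')$. This reduces the production of a section of $L$ on $X$ to the production of one of $L|_{W}$ on $W$; since by subadjunction $(W,B_W)$ is klt and $(L-(K_X+B'))|_{W}$ is ample, the inductive hypothesis in dimension $<d$ delivers the latter.

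The genuinely hard case, and the reason the statement is a conjecture rather than a theorem, is when $L$ is nef but not big, so that $L^{d}=0$. Here I would try to exploit the nef reduction map $f\colon X\dashrightarrow Z$ attached to $L$, along whose fibres $L$ is numerically trivial, and to compare $\chi(X,L)$ with $\chi(Z,L_Z)$ for a suitable line bundle $L_Z$ on the lower-dimensional base $Z$. Using the canonical bundle formula one hopes to equip $(Z,B_Z)$ with a klt structure for which $L_Z-(K_Z+B_Z)$ stays ample, so that the induction closes.

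The main obstacle is precisely this last reduction. The nef reduction is in general only an almost holomorphic rational map, so it cannot be pushed forward directly; flattening it to an honest fibration forces one to control discrepancies and, through the canonical bundle formula, the positivity of the moduli part $M_Z$. Ensuring that $L_Z-(K_Z+B_Z)$ remains ample after absorbing $M_Z$ requires semi-positivity and effectivity statements for the moduli part that are known only in special situations. This is exactly where the inductive scheme stalls, and it is the point at which any complete proof of the conjecture would have to do substantially new work.
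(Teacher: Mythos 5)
The statement you are trying to prove is Conjecture~\ref{effective non-vanishing}, the Ambro--Kawamata effective non-vanishing conjecture. The paper does not prove it: it is stated as an open conjecture and used only as a \emph{hypothesis} in Theorem~\ref{K implies M}, with the known cases ($\dim X=2$ by Kawamata, smooth Fano $3$-folds by Broustet--H\"oring) cited in Corollary~\ref{eff non-van dime 2}. So there is no proof in the paper to compare yours against, and your text should be read as a strategy outline rather than a proof. Your opening reduction is correct and standard: Kawamata--Viehweg vanishing gives $h^0(X,L)=\chi(X,L)$, so the conjecture is the numerical statement $\chi(X,L)\geq 1$. But from there the argument has genuine gaps, one of which you flag yourself and one of which you do not.

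The gap you acknowledge is the nef-but-not-big case, and your diagnosis of why the nef reduction plus canonical bundle formula does not close the induction (almost holomorphicity, control of the moduli part) is accurate. The gap you do not acknowledge is that the big case is \emph{also} open in general, and the tie-breaking argument you sketch does not go through as written. Perturbing by $B'=B+\varepsilon E$ with $\varepsilon$ small does not create a non-klt centre; you must scale $E$ up to its log canonical threshold $c$, and then $L-(K_X+B')=A-cE$ need not remain ample (nor even pseudo-effective), since Kodaira's lemma gives no control on the multiplicities of $E$ and hence none on $c$. This is precisely why the Shokurov--Kawamata machinery produces sections of $mL$ for $m\gg 0$ but not of $L$ itself, and why even the ample case of the conjecture is open in dimension $\geq 3$ in general. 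If you want to say something provable in the spirit of your sketch, the honest statements are the low-dimensional cases cited in the paper, or the conditional extensions to quasi-log canonical and generalized klt pairs in Lemma~\ref{effective non-vanishing:g}, which assume the klt conjecture and reduce to it by adjunction and perturbation.
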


Namely the following is the main theorem in this article:

\begin{theorem}
\label{K implies M}Assume that Conjecture \ref{effective non-vanishing} holds. Then Conjecture \ref{gamma Mukai conjecture} holds. 

\end{theorem}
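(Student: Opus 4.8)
The plan is to run an induction on the Picard number $\rho(X)$, with base case $\rho(X)=1$: there Conjectures \ref{effective non-vanishing} and \ref{gamma Mukai conjecture} reduce to the Kobayashi--Ochiai theorem \cite{kooc}, so the base case is unconditional. Throughout I read the total index as the largest number of nonzero summands in a relation $-K_X\sim_{\mathbb Z}\sum_{i=1}^{\tau_X}L_i$ with each $L_i$ nef and Cartier. The first observation is that for a Fano $X$ any nef Cartier $L$ has $L-K_X=L+(-K_X)$ ample, so by Conjecture \ref{effective non-vanishing} each $L_i$ satisfies $H^0(X,L_i)\ne 0$ and can be represented by an honest effective divisor; this is precisely what lets me argue geometrically with the parts rather than merely with their classes. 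I would then reduce to a \emph{maximal} nef-partition and note that maximality forces each $L_i$ to be indecomposable in the monoid of nef Cartier classes (a splitting $L_i\sim A+B$ into nonzero nef parts would lengthen the partition), hence to generate an extremal ray of $\mathrm{Nef}(X)$; by the base-point-free theorem such an $L_i$ is semiample and, being on a one-dimensional face of the nef cone, is pulled back from a contraction $\psi_i\colon X\to Z_i$ onto a variety of Picard number one.

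For the inductive step ($\rho(X)\ge 2$) I would single out one such contraction $\psi\colon X\to Z$ attached to a non-big part $L=L_{i_0}$ and pass to a general fibre $F$. Since $L$ is pulled back from $Z$, adjunction gives $-K_F=-K_X|_F=\sum_{i\ne i_0}L_i|_F$, exhibiting $F$ as a smooth Fano of strictly smaller dimension carrying the nef-partition obtained by restricting the surviving parts, while the parts pulled back from $Z$ (in particular every copy of $L$) restrict to $0$ and are discarded. Applying the induction hypothesis to $F$, together with the Picard-number-one bound on $Z$ supplied by the base case, I would add the contributions of base and fibre, using $\dim X=\dim F+\dim Z$ and tracking how $\rho$ distributes between them to obtain $\tau_X\le \dim X+\rho(X)$. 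Conjecture \ref{effective non-vanishing} re-enters here to guarantee that the restricted nef classes on $F$, and the relevant nef classes on $Z$, remain genuinely effective, so that the restricted data is again a bona fide nef-partition and no parts are silently lost in the accounting; the decisive and most delicate point is this bookkeeping, i.e.\ showing that the parts absorbed by $Z$ and those surviving on $F$ combine exactly to $\dim X+\rho(X)$.

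The step I expect to be hardest is the equality statement. When $\tau_X=\dim X+\rho(X)$, every inequality invoked in the induction must be an equality, which should force both the base $Z$ and the general fibre $F$ to realize equality in their own instances of Conjecture \ref{gamma Mukai conjecture}: by the base case $Z\cong\mathbb P^{\dim Z}$, and by induction $F\cong\mathbb P^{d_1}\times\cdots$. The remaining difficulty is rigidity --- one must upgrade the fibration $\psi\colon X\to Z$, whose fibres are products of projective spaces, to an actual isomorphism $X\cong Z\times F$, and then conclude $X\cong\mathbb P^{d_1}\times\cdots\times\mathbb P^{d_{\rho(X)}}$. I would attempt this by showing that the extremal contractions attached to the $\rho(X)$ distinct nef rays occurring in the maximal partition are mutually compatible projective-space bundles which the tightness $\tau_X=\dim X+\rho(X)$ forces to be \emph{trivial}, using the characterization of $\mathbb P^n$ by its anticanonical degree along minimal rational curves together with a splitting argument for Fano fibrations. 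Ruling out nontrivial bundle structures, i.e.\ extracting a genuine product decomposition from the extremal-case equalities, is the principal obstacle of the proof.
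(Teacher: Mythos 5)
Your proposal is a plan with several load-bearing steps that are either unjustified or false, so it does not constitute a proof. First, the claim that maximality of the nef-partition forces each $L_i$ to generate an extremal ray of $\mathrm{Nef}(X)$ is wrong: maximality only gives indecomposability in the monoid of nef Cartier classes, and an indecomposable lattice point can lie in the interior of the cone (e.g.\ $(1,1)$ in the cone spanned by $(1,0)$ and $(1,2)$). Hence the contractions $\psi_i\colon X\to Z_i$ with $\rho(Z_i)=1$ need not exist. Second, even granting extremality, your inductive step requires a \emph{non-big} part $L_{i_0}$ so that the associated contraction is of fibre type with $\dim F>0$; nothing guarantees that the chosen maximal partition contains such a part, and if every $L_i$ is big the induction never starts. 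Third, the ``bookkeeping'' --- that $\rho$ and $\tau$ distribute additively over base and fibre, and that the restricted classes form a (maximal) nef-partition of $F$ --- is exactly the content of the inequality and is nowhere established; the restriction $\Pic(X)\to\Pic(F)$ need not behave as you assume. Finally, you explicitly leave open the rigidity step in the equality case (upgrading the fibration to a product and ruling out nontrivial bundle structures); that is not a detail to be filled in later but the entire characterization statement.

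The paper's actual argument is completely different and bypasses all of this. Assuming $\dim X+\rho(X)-\tau_X\le 0$, it uses Conjecture \ref{effective non-vanishing} through a ladder-type argument (Proposition \ref{choice of lc boundary}) to produce reduced divisors $S_{i,j}\sim_{\mathbb Z}L_i$ such that $(X,\sum_{i,j}S_{i,j})$ is log canonical with $\tau_X\ge\dim X+\rho(X)$ components; the complexity of this pair is then at most $0$, so the theorem of Brown--M\textsuperscript{c}Kernan--Svaldi--Zong \cite{bmsz} forces the pair to be toric and the complexity to vanish, i.e.\ $\tau_X=\dim X+\rho(X)$. All torus-invariant divisors then occur among the $S_{i,j}$ and are therefore nef, and a smooth toric variety on which every effective divisor is nef is a product of projective spaces by \cite{fs}. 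The complexity theorem and the toric classification are the essential inputs, and your inductive framework contains neither.
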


We note that we study Conjecture \ref{gamma Mukai conjecture} from the viewpoint of Shokurov's complexity for generalized pairs in \cite{gm}. 

\section*{Aknowledgement}
The author was partially supported by grants JSPS KAKENHI $\#$16H02141, 17H02831, 18H01108, 19KK0345, 20H00111, and JSPS Bilateral Joint Research Projects JPJSBP120219935. He thanks Professors Taku Suzuki,  Atsushi Ito, Joaqu\'in Moraga, Yuji Odaka, Kenji Matsuki, and Shigeru Mukai for the discussion. I appreciate A. Ito for informing references \cite{ba}, \cite{bn}. Moreover, he and J. Moraga study this project together from the study of generalized complexities in \cite{gm}. He also thanks Professor Osamu Fujino for informing the right references for the fact of quasi-log schemes and pointing out mistakes in the previous version.  
\section{Preliminaries}

 In this section, we recall some notations of generalized pairs and quasi-log schemes. 
For more detail, we refer the reader to \cite{Bir20} and \cite{ambro-quasi},  \cite{fujino-book}. First, we recall the definition of generalized log canonical and kawamata log terminal.

\begin{definition}\label{defi:gp}
A \textit{generalized pair} $(X,B+M)$ consists of
\begin{itemize}
\item a normal projective variety $X$, 
\item an effective $\mathbb{R}$-divisor $B$ on $X$, and 
\item a b-$\mathbb{R}$-Cartier b-divisor over $X$ represented by some projective birational morphism $\varphi: X' \to X$ and 
a nef $\mathbb{R}$-Cartier divisor $M'$ on $X'$
\end{itemize}
such that $M = \varphi_* M'$ and $K_{X}+B+M$ is $\mathbb{R}$-Cartier. 
\end{definition}

\begin{definition}
\label{def:gen-lc}
{\em 
Let $(X, B+M)$ be a generalized pair.
Let $Y\rightarrow X$ be a projective birational morphism.
We can write
\[
K_Y+B_Y+M_Y=\pi^*(K_X+B+M),
\]
for some divisor $B_Y$ on $Y$.
The {\em generalized log discrepancy}
of $(X,B+M)$ at a prime divisor $E\subset Y$, denoted by $a_E(X,B+M)$
is defined to be $1-{\rm coeff}_E(B_Y)$.

A generalized pair $(X, B+M)$ is said to be {\em generalized log canonical} (or glc for short) if all its generalized log discrepancies are non-negative.
A generalized pair $(X,B+M)$ is said to be {\em generalized Kawamata log terminal} (or gklt for short) if all its generalized log discrepancies are positive.
In the previous definitions, if the analogous statement holds for $M=0$, then we drop the word ``generalized".

}
\end{definition}

Moreover, we recall the notion of the quasi-log structure by Ambro--Fujino:

\begin{definition}[Quasi-log canonical pairs]\label{quasi-log}

Let $X$ be a scheme and let $\omega$ be an 
$\mathbb R$-Cartier divisor (or an $\mathbb R$-line bundle) on $X$. 
Let $f:Z\to X$ be a proper morphism from a globally embedded 
simple normal 
crossing pair $(Z, \Delta_Z)$, i.e. $Z$ is a simple normal crossing divisor on some smooth variety $M$, and  $\Delta_Z=\Delta_M|_Z$, where  $\Delta_M$ is some divisor  on $M$ such that $\mathrm{Supp}\,(\Delta_M+Z)$ is simple normal crossing and  $\mathrm{Supp}\,\Delta_M$ does not contain any components of  $Z$. Note that $\Delta_Z$  is an $\mathbb{R}$-divisor which is not necessarily effective.     If the natural map 
$\mathcal O_X\to f_*\mathcal O_Z(\lceil -(\Delta_Z^{< 1})\rceil)$ is an 
isomorphism, $\Delta_Z=\Delta^{\leq 1}_Z$, 
and $f^*\omega\sim _{\mathbb R} K_Z+\Delta_Z$, 
then $\left(X, \omega, f:(Z, \Delta_Z)\to X\right)$ 
is called a {\em{quasi-log canonical pair}} 
({\em{qlc pair}}, for short). 
Let $C$ be a closed subvariety of $X$. We say that $C$ is a {\it qlc center} if $C$ is not an irreducible component of $X$ and an image of some stratum of $(Z, \lfloor \Delta_Z \rfloor )$. 
We denote the union of all qlc centers by  $\mathrm{Nqklt(X, \omega)}$. 

We call the triple  {\em{quasi-kawamata log terminal pair}} 
({\em{qklt pair}}, for short) when  $\mathrm{Nqklt(X, \omega)}=\emptyset$ 
. If there is no danger of confusion, 
we simply say that $[X, \omega]$ is a qlc pair (resp. qklt pair). 
\end{definition}

\begin{lemma}\label{rem-qklt}Let $[X, \omega]$ be a projective  qklt, then $X$ is normal and there exists an $\mathbb{R}$-divisor $B+M$ such that $ \omega \sim_{\mathbb{R}}K_X+B+M $ and $(X, B+M)$ is generalized klt. Moreover, when $\omega$ is a $\mathbb{Q}$-divisor, so are $B$ and $M$. 
\end{lemma}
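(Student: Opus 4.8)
The plan is to first extract normality (and several simplifications) from the structure theory of quasi-log canonical pairs, and then to produce the generalized pair by running a canonical bundle formula on the defining morphism $f\colon Z\to X$.

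First I would record the structural consequences of the qklt hypothesis. By the basic theory of quasi-log canonical pairs (Ambro--Fujino, cf. \cite{ambro-quasi}, \cite{fujino-book}), the non-normal locus of $X$, the intersection of any two distinct irreducible components, and the images of the deeper strata of $(Z,\lfloor\Delta_Z\rfloor)$ are all qlc centers. Since $\mathrm{Nqklt}(X,\omega)=\emptyset$, none of these can occur: $X$ is forced to be irreducible and normal, which is the first assertion; $Z$ must be irreducible, hence smooth (a reduced irreducible simple normal crossing divisor is smooth); and $\lfloor\Delta_Z\rfloor=0$, so every coefficient of $\Delta_Z$ is $<1$. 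Moreover the isomorphism $\mathcal O_X\cong f_*\mathcal O_Z(\lceil-(\Delta_Z^{<1})\rceil)$ pins down $f$: over the generic point of $X$ the twist is trivial, so $f$ is surjective with connected fibres, and after replacing $Z$ by a higher log resolution I may assume $Z$ is smooth and $f$ is a contraction.

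With these reductions $(Z,\Delta_Z)$ is a sub-klt pair with $K_Z+\Delta_Z\sim_{\mathbb R}f^*\omega$, so $K_Z+\Delta_Z$ is $\mathbb R$-linearly trivial over $X$ --- exactly the input of the canonical bundle formula for the contraction $f$. Applying it, I obtain a discriminant divisor $B$ and a moduli part $M$ on $X$ with
\[
K_Z+\Delta_Z\sim_{\mathbb R}f^*(K_X+B+M),
\]
where $M=\varphi_*M'$ for a nef $\mathbb R$-divisor $M'$ on a suitable birational model $\varphi\colon X'\to X$, the moduli b-divisor being b-nef by the positivity theorems for the canonical bundle formula (Ambro, Fujino, Kollár). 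Since $f$ is surjective, pushing the relation forward gives $\omega\sim_{\mathbb R}K_X+B+M$, so $(X,B+M)$ is a generalized pair with $K_X+B+M$ an $\mathbb R$-Cartier divisor $\mathbb R$-linearly equivalent to $\omega$. That $M$ cannot in general be absorbed into the boundary (forcing the \emph{generalized}, rather than ordinary, klt category) is already visible for a klt Calabi--Yau fibration, where $B=0$ and $M$ is a genuinely nef moduli part.

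It remains to verify that the pair is generalized klt and to treat the rational case. Because $\lfloor\Delta_Z\rfloor=0$ and $(X,\omega)$ has no qlc centers, the discriminant $B$ is effective with all coefficients $<1$, and no coefficient $1$ is produced over any prime divisor; computing generalized log discrepancies on a common resolution of $X'$ and $Z$, the positivity of the log discrepancies of $(Z,\Delta_Z)$ descends to show that every generalized log discrepancy of $(X,B+M)$ is positive, i.e. the pair is generalized klt. If $\omega$ is $\mathbb Q$-Cartier I may take $\Delta_Z$ to be a $\mathbb Q$-divisor, and the canonical bundle formula then outputs a $\mathbb Q$-divisor $B$ together with a $\mathbb Q$-b-divisor whose trace $M'$ is nef and $\mathbb Q$-Cartier, so $B$ and $M$ are $\mathbb Q$-divisors. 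The hard part is the middle step: one must know both that the moduli part is the pushforward of a \emph{nef} divisor (b-nefness of the moduli b-divisor) and that the discriminant extracted from the qlc data is effective with coefficients $<1$; these rest on the deep positivity results for the canonical bundle formula and on a careful translation between the normalization condition $\mathcal O_X\cong f_*\mathcal O_Z(\lceil-(\Delta_Z^{<1})\rceil)$ and the discriminant. Alternatively, one can invoke the known equivalence between normal quasi-log canonical pairs and generalized log canonical pairs and simply specialize it to the center-free case.
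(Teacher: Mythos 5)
The paper's own proof is purely citational: normality of $X$ is \cite[Lemma 6.9]{fujino-book}, and the existence of the generalized klt structure (including the $\mathbb{Q}$-coefficient refinement) is \cite[Theorem 1.3]{fujino-on-quasi}. Your argument is in effect a sketch of the proofs of those two cited results: the analysis of strata for normality, and the canonical bundle formula applied to $f\colon (Z,\Delta_Z)\to X$ to manufacture $B+M$, with b-nefness of the moduli part as the deep input. That is indeed the route Fujino takes (via basic slc-trivial fibrations), and your closing alternative --- ``invoke the known equivalence between normal qlc pairs and generalized lc pairs and specialize to the center-free case'' --- is exactly what the paper does. Two points, one of which is a genuine flaw in your sketch. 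First, the reduction to an irreducible (hence smooth) $Z$ is not justified by the qklt hypothesis alone: by the definition used in the paper, a stratum of $(Z,\lfloor\Delta_Z\rfloor)$ whose image is an irreducible component of $X$ is \emph{not} a qlc center, so $\mathrm{Nqklt}(X,\omega)=\emptyset$ does not rule out several components of $Z$, or strata of $\lfloor\Delta_Z\rfloor$, dominating $X$. One therefore cannot in general fall back on the classical klt-trivial canonical bundle formula; the slc-trivial version is needed, and that is precisely the content of the references the paper cites. Second, the b-nefness of the moduli b-divisor and the translation of the condition $\mathcal O_X\cong f_*\mathcal O_Z(\lceil-(\Delta_Z^{<1})\rceil)$ into effectivity and coefficient bounds for the discriminant are the entire substance of the cited theorem; as written you defer both, so as a self-contained argument your proof has a gap exactly where the paper inserts a citation, and if those citations are allowed your argument collapses to the paper's.
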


\begin{proof}From the definition of qklt, it follows that  $\mathrm{Nqklt(X, \omega)}= \emptyset$. 
By \cite[Lemma 6.9]{fujino-book}, we see that $X$ is normal. The latter assertion follows from \cite[Theorem 1.3]{fujino-on-quasi}. 

\end{proof}

\section{Total indices}\label{total indices:section}In this section, we define the total index of Fano varieties and compute some examples. In \cite{ba}, \cite{bn}, the invariants for Toric varieties appear in the context of the Mirror symmetry.

 \begin{definition}\label{total-index}Let $X$ be a smooth Fano variety. We define {\it the total index} $\tau_X$ of  $X$ as the maximal of $\sum a_i$ for  a decomposition 
 
 $$-K_X= \sum a_i L_i,$$
 where $L_i$ are nef line bundles which is not numerically trivial and $a_i \in \mathbb{Z}_{>0}$.  Note that we allow $L_i=L_j$ for $i \not =j$.  
 
 \end{definition} 
 
 It seems to be no direct relationship with the product of the Fano index and the Picard number although  we propose Conjecture \ref{gamma Mukai conjecture} motivated with Conjecture \ref{mukai conjecture} ;
 
 \begin{example}\label{eg1}Let $X$ be a three point blow-up of $\mathbb{P}^2$. Then $\tau_X=3$ but $\rho(X)=4$ and the Fano index $i(X)=1$. Thus $\tau_X < \rho(X) i(X)$. 
 
 \end{example}

  \begin{example}\label{eg2}Let $X$ be a one point blow-up of $\mathbb{P}^2$. Then $\tau_X=3$ but $\rho(X)=2$ and the Fano index $i(X)=1$. Thus $\tau_X > \rho(X) i(X)$. 

 \end{example}

\section{Ambro--Kawamata's effective non-vanishing conjecture}In this section, we discuss extending Conjecture \ref{effective non-vanishing} beyond kawamata log terminal singularities.

We expect that  Conjecture \ref{effective non-vanishing} is valid for generalized klt pairs or quasi-log canonical pairs:

\begin{lemma}\label{effective non-vanishing:g}Assume that Conjecture \ref{effective non-vanishing} holds. Let $[X,\omega]$ be a projective quasi-log canonical pair (resp. $(X,B+M)$ is a projective generalized klt pair) and $L$ a nef line bundle on $X$ such that $L-\omega$ (resp. $L-(K_X+B+M)$) is ample. 
Then $H^0(X, L) \not =0$. 

\end{lemma}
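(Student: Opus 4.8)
The plan is to reduce both statements to Conjecture \ref{effective non-vanishing} for klt pairs, treating the generalized klt case first and then bootstrapping to the quasi-log canonical case by induction on dimension.

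First I would handle the generalized klt pair $(X, B+M)$. Writing $A := L-(K_X+B+M)$, which is ample by hypothesis, the key point is to absorb the nef part $M$ into the boundary at the cost of a small ample perturbation. Let $\varphi\colon X'\to X$ be the birational model carrying the nef divisor $M'$ with $\varphi_*M'=M$. For $t\in(0,1)$ the divisor $M'+t\varphi^*A$ is nef and big, so by Kodaira's lemma it is $\mathbb{R}$-linearly equivalent to an effective divisor; taking a general such representative on a log resolution and pushing forward, I can produce an effective $\mathbb{R}$-divisor $G\sim_{\mathbb R} M+tA$ on $X$ with $(X,B+G)$ klt. Then
$$L-(K_X+B+G)\sim_{\mathbb R} A-tA=(1-t)A$$
is ample, $L$ is still a nef line bundle, and $(X,B+G)$ is klt, so Conjecture \ref{effective non-vanishing} yields $H^0(X,L)\neq 0$.

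For the quasi-log canonical pair $[X,\omega]$ I would argue by induction on $\dim X$. If $[X,\omega]$ is qklt, then $\mathrm{Nqklt}(X,\omega)=\emptyset$, and Lemma \ref{rem-qklt} produces a generalized klt pair $(X,B+M)$ with $\omega\sim_{\mathbb R}K_X+B+M$; since $L-\omega$ is then ample, this is exactly the generalized klt case just treated. If $W:=\mathrm{Nqklt}(X,\omega)$ is nonempty, I would invoke the theory of quasi-log schemes: by adjunction $[W,\omega|_W]$ inherits a quasi-log canonical structure with $\dim W<\dim X$ and $(L-\omega)|_W$ ample. The vanishing theorem for qlc pairs gives $H^1(X,\mathcal{I}_W\otimes L)=0$, where $\mathcal{I}_W$ is the defining ideal sheaf of $W$, because $L-\omega$ is ample. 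Hence the restriction sequence
$$0\to \mathcal{I}_W\otimes L\to L\to L|_W\to 0$$
shows that $H^0(X,L)\to H^0(W,L|_W)$ is surjective, and by the inductive hypothesis $H^0(W,L|_W)\neq 0$, so $H^0(X,L)\neq 0$.

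The main obstacle I expect is the generalized-to-ordinary reduction in the first step: since $M$ is only nef and not semiample, turning it into an effective boundary requires the ample room $tA$ together with Kodaira's lemma, and one must simultaneously guarantee that the resulting boundary $B+G$ remains klt, which is where a sufficiently general choice on the log resolution is needed. This perturbation is the technical heart of the argument; once it is in place, the quasi-log canonical case follows formally from Fujino's vanishing theorem and adjunction for qlc pairs via the dimension induction above.
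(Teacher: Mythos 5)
Your proposal is correct and follows essentially the same route as the paper: induction on dimension, reduction to the quasi-klt case via adjunction and the vanishing theorem for qlc pairs, passage to a generalized klt structure via Lemma \ref{rem-qklt}, and absorption of the nef part $M$ into an effective klt boundary by a small ample perturbation before invoking Conjecture \ref{effective non-vanishing}. The only cosmetic difference is that you perturb by $tA$ with $A=L-(K_X+B+M)$ where the paper uses an auxiliary general ample divisor $\varepsilon H$; both are the same standard trick.
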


\begin{proof}We shall prove by induction on dimension. By the adjunction and vanishing theorem \cite[Theorem 6.3.4(2)]{fujino-book}, we may assume that $[X,\omega]$ has no quasi-lc center, then $[X, \omega]$ has generalized klt structure by Lemma \ref{rem-qklt}, i.e. $ \omega\sim_{\mathbb{R}}K_X+B+M $ for some generalized klt pair $(X, B+M)$. Moreover, take a general effective $\mathbb{R}$-ample divisor $H$ such that there exists an effective $\mathbb{R}$-divisor $B'$ with $B+M+\varepsilon H \sim_{\mathbb{R}} B'$ for some small $\varepsilon>0$ and that $(X, B')$ is klt.    Thus, we may assume that $M=0$. Thus the conclusion holds. 

\end{proof}

Now we discuss some version of the ladder arguments for Fano varieties cf. \cite{ambro-fano}, \cite{kawamata-effective}.

\begin{lemma}\label{adding divisor} Let $[X, \omega]$ be a normal quasi-log canonical pair and $D$ be an effective $\mathbb{R}$-Cartier divisor on $X$. Let $l$ be the quasi-log canonical thresholds of $D$ for $[X, \omega]$, i.e. 
$$l=\sup\{s\geq 0| [X,  \omega+sD ] \text{ has quasi-log canonical structure}\}. $$ Then for an associated generalized log canonical structure $(X, B+M)$ of $[X,\omega]$, the divisors  
$$B'=B+lD \text{ and } M'=M
$$
give a generalized log canonical structure $(X, B'+M')$ on $[X,\omega+lD].$

\end{lemma}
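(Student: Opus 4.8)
The plan is to reduce the statement to two independent claims and dispatch each separately. The $\mathbb{R}$-linear equivalence is immediate: since $(X,B+M)$ is an associated generalized log canonical structure of $[X,\omega]$ we have $\omega \sim_{\mathbb{R}} K_X + B + M$, and adding $lD$ to both sides gives
$$\omega + lD \sim_{\mathbb{R}} K_X + (B+lD) + M = K_X + B' + M'.$$
Hence the only substantive point is to show that $(X,B'+M') = (X, B+lD+M)$ is a \emph{generalized log canonical} pair; once this is known, the pair automatically furnishes an associated generalized log canonical structure for $[X,\omega+lD]$.

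First I would record how the generalized log discrepancies vary along the ray $s \mapsto B + sD + M$. Fix a projective birational morphism $\pi\colon Y \to X$ extracting all divisors $E$ relevant to $(X,B+M)$ and to $D$, and write $K_Y + B_Y + M_Y = \pi^*(K_X + B + M)$. Since $D$ is effective and $\mathbb{R}$-Cartier, pulling back $sD$ gives $K_Y + (B_Y + s\,\pi^*D) + M_Y = \pi^*(K_X + B + sD + M)$ with $\pi^*D$ effective, so each generalized log discrepancy $a_E(X, B+sD+M) = 1 - \operatorname{coeff}_E(B_Y + s\,\pi^*D)$ is an affine-linear, non-increasing function of $s$. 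Therefore the condition ``$(X, B+sD+M)$ is generalized lc'', being the closed condition that all these functions are $\geq 0$, holds on a closed interval $[0,t]$ containing $0$, and the generalized log canonical threshold $t := \sup\{s \geq 0 : (X, B+sD+M)\ \text{is generalized lc}\}$ is attained: $(X, B+tD+M)$ is generalized log canonical.

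It then remains to identify $t$ with the quasi-log canonical threshold $l$, after which $(X, B+lD+M) = (X, B+tD+M)$ is generalized log canonical and we are done. Here I would invoke the correspondence between normal quasi-log canonical pairs and generalized log canonical pairs underlying Lemma \ref{rem-qklt} (see \cite{fujino-on-quasi}): the quasi-log structure on $[X,\omega]$ and the generalized pair $(X,B+M)$ share the same underlying b-divisor data, and adding $sD$ to $\omega$ on the quasi-log side corresponds to adding $sD$ to the boundary $B$ while leaving the nef part $M$ unchanged. Consequently, on the common model $Y$ above, the statements ``$[X,\omega+sD]$ is quasi-log canonical'' and ``$(X, B+sD+M)$ is generalized log canonical'' are governed by the very same inequalities $\operatorname{coeff}_E(B_Y + s\,\pi^*D) \leq 1$, so the two thresholds coincide, $l = t$, the supremum defining $l$ is attained, and $(X,B+lD+M)$ is generalized log canonical. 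The main obstacle is precisely this identification $l = t$: one must verify that the quasi-log / generalized dictionary is compatible with adding $sD$ to the boundary, i.e.\ that the nef part $M$ stays fixed and the birational model can be chosen uniformly in $s$, so that the two threshold computations reduce to the identical linear problem, with the affine-linearity and closedness of the second step then forcing equality and attainment.
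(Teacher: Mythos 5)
Your first two steps are sound: the $\mathbb{R}$-linear equivalence is immediate, and the observation that generalized log discrepancies vary affine-linearly and non-increasingly in $s$ (so that the set of $s$ with $(X,B+sD+M)$ generalized lc is a closed interval and the generalized lc threshold $t$ is attained) is standard, once one fixes a model on which the nef part descends. The gap is the third step, which you yourself flag as ``the main obstacle'': the identification $l=t$ rests entirely on the assertion that adding $sD$ to $\omega$ on the quasi-log side ``corresponds to adding $sD$ to the boundary $B$ while leaving the nef part $M$ unchanged.'' That assertion \emph{is} the lemma; it is not something you can read off from Lemma \ref{rem-qklt} or \cite{fujino-on-quasi}, which only produce, for the single pair $[X,\omega]$, the existence of \emph{some} glc structure $(X,B+M)$ and say nothing about how that structure behaves under perturbation of $\omega$.

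Concretely, the two conditions do not live on a ``common model $Y$.'' The qlc condition for $[X,\omega+sD]$ is a coefficient condition on the globally embedded simple normal crossing pair $(Z,\Delta_Z+sf^*D)$, where $f\colon Z\to X$ is proper but in general not birational ($Z$ may be reducible and of dimension larger than $\dim X$), together with the requirement that $\mathcal O_X\to f_*\mathcal O_Z(\lceil -((\Delta_Z+sf^*D)^{<1})\rceil)$ remain an isomorphism; the glc condition for $(X,B+sD+M)$ is a discrepancy condition over $X$. Translating the former into the latter is exactly the content of the theory of basic slc-trivial fibrations: one must show that the discriminant b-divisor of the induced fibration $f\colon (Z,\Delta_Z+sf^*D)\to X$ equals $B+sD$ and that the moduli b-divisor is unchanged and still b-nef. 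This is precisely what the paper's proof invokes, namely the proof of \cite[Theorem 1.7]{fujino-slc fibraion}; your proposal names the needed compatibility but does not establish it. A further point left unaddressed: $l$ is defined by the existence of \emph{some} quasi-log canonical structure on $[X,\omega+sD]$, not necessarily the induced one, so even granting the dictionary for the induced structure you still owe an argument that no other qlc structure survives for $s>t$.
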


\begin{proof}See the proof of \cite[Theorem 1.7]{fujino-slc fibraion}. 

\end{proof}

\begin{proposition}\label{choice of lc boundary}Assume that Conjecture \ref{effective non-vanishing} holds. Let $[X,\omega]$ be a projective normal quasi-log canonical scheme such that $-\omega$ is nef with a generalized lc structure $(X, B+M)$ such that $\omega\sim_{\mathbb{R}} K_X+B+M$ and $X$ is of Fano type, i.e. there exists a big $\mathbb{Q}$-divisor $\Delta \sim_{\mathbb{Q}} -K_X$ such that $(X, \Delta)$ is klt.  Suppose that we have a decomposition 
 
 $$-(K_X+B+M) \equiv \sum_{i=1}^r a_i L_i,$$
 where $L_i$ are nef line bundles such that $L_i \not \equiv 0$ for any $i$.  Then there exists  a reduced divisor $S_{i,j} \sim_{\mathbb{Z}} L_i $ for $j=1, \cdots , \lceil a_i  \rceil$ and for any $i$ such that 
 $$(X, B+M+\sum_{i=1}^r \sum_{j=1}^{\lfloor a_i \rfloor}S_{i,j}+ \sum_{i}\{a_i\} S_{i, \lceil a_i  \rceil})$$  is log canonical. 
\end{proposition}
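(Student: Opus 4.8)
The plan is to construct the divisors one at a time by induction on $n:=\sum_{i=1}^r\lceil a_i\rceil$, at each stage enlarging the boundary by a single reduced member of the relevant linear system while preserving both the generalized log canonical condition and the nefness of minus the log canonical class. Concretely, I would maintain a boundary $\Theta_k=B+(\text{divisors produced so far, with their coefficients})$ together with the invariant that $(X,\Theta_k+M)$ is generalized log canonical, that the coefficients already peeled off $L_i$ are a string of $1$'s followed at most once by $\{a_i\}$, and that $-(K_X+\Theta_k+M)\equiv\sum_i c_i^{(k)}L_i$ with $0\le c_i^{(k)}\le a_i$; this class is then nef, being a non-negative combination of the nef bundles $L_i$, so the hypotheses of the statement persist. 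The base case $n=0$ has every $c_i^{(k)}=0$, so $\Theta_k=B+\sum_i\bigl(\sum_{j\le\lfloor a_i\rfloor}S_{i,j}+\{a_i\}S_{i,\lceil a_i\rceil}\bigr)$ and $(X,\Theta_k+M)$ is generalized log canonical, which is exactly the assertion; the initial case $\Theta_0=B$ is covered because $(X,B+M)$ is the given generalized lc structure.

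For the inductive step I fix an index $i_0$ with $c_{i_0}^{(k)}>0$ and set $c:=\min\{1,c_{i_0}^{(k)}\}$, so that $c=1$ when peeling off an integral copy of $L_{i_0}$ and $c=\{a_{i_0}\}$ for the final, fractional copy. To produce an effective $S\sim_{\mathbb Z}L_{i_0}$ I would apply Lemma \ref{effective non-vanishing:g}. Since $X$ is of Fano type, a standard characterization of Fano type varieties provides a klt boundary $B^{\sharp}$ with $-(K_X+B^{\sharp})$ ample; then $L_{i_0}-(K_X+B^{\sharp})$ is the sum of the nef bundle $L_{i_0}$ and an ample class, hence ample, and Lemma \ref{effective non-vanishing:g} (with $M=0$) yields $H^0(X,L_{i_0})\neq 0$. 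I would then take $S$ to be a suitably chosen reduced member of $|L_{i_0}|$.

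With $S$ in hand I add it to the boundary via Lemma \ref{adding divisor}: writing $l$ for the quasi-log canonical threshold of $S$ for $[X,K_X+\Theta_k+M]$, that lemma shows $(X,\Theta_k+lS+M)$ is generalized log canonical, and since shrinking an effective boundary only raises generalized log discrepancies, $(X,\Theta_k+cS+M)$ is generalized log canonical as soon as $l\ge c$. The crux, and the step I expect to be the main obstacle, is therefore to guarantee $l\ge 1$ (resp. $l\ge\{a_{i_0}\}$), that is, to choose $S$ with sufficiently mild singularities both along the qlc centers of $(X,\Theta_k+M)$ and along $\Bs|L_{i_0}|$. The issue is genuine because an arbitrary section furnished by non-vanishing could be non-reduced or could pass through a deep center, forcing a small threshold.

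To resolve this I would exploit that on a Fano type variety every nef line bundle is semiample, so $L_{i_0}$ induces a contraction $\varphi\colon X\to Z$; a member pulled back from a general divisor on $Z$ is reduced, contains no qlc center of $(X,\Theta_k+M)$ that fails to dominate $Z$, and meets the dominating centers properly, so that adding it with coefficient $1$ creates no new non-log-canonical locus away from $\Bs|L_{i_0}|$. The residual difficulty is precisely the base locus of the unmultiplied system, which I would treat by the ladder/adjunction method of Ambro and Kawamata: restricting to the qlc centers, where $-(K_X+\Theta_k+M)$ remains nef by the invariant, and applying the quasi-log version of Lemma \ref{effective non-vanishing:g} in lower dimension, one extends sections from the non-klt locus and arranges that $S$ meets the centers as a new log canonical place rather than a non-log-canonical one, forcing $l\ge c$. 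Finally, setting $\Theta_{k+1}=\Theta_k+cS$ and $c_{i_0}^{(k+1)}=c_{i_0}^{(k)}-c$ restores the invariant and lowers $n$; the copies peeled off with $c=1$ become the $S_{i,j}$ with $j\le\lfloor a_i\rfloor$ and the copy peeled off with $c=\{a_i\}$ becomes $S_{i,\lceil a_i\rceil}$, completing the induction and producing the pair in the statement.
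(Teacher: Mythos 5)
Your strategy for the stage where $-(K_X+B+M)$ is ample is essentially the paper's: produce a section of $L_{i_0}$ by effective non-vanishing, compute the log canonical threshold $l$ of a general member, and, assuming $l<c$, pass to a minimal lc center $C$, extend sections from $C$ by a vanishing theorem, and apply effective non-vanishing on $C$ to contradict $C\subset\Bs|L_{i_0}|$. Your description of that contradiction is vaguer than it should be, but the tools you name are the right ones, and the initial non-vanishing on $X$ via a klt boundary from the Fano type hypothesis is fine.

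The genuine gap is in the general case, where $-\omega$ is only nef. Your inductive invariant guarantees only that $-(K_X+\Theta_k+M)\equiv\sum_i c_i^{(k)}L_i$ is nef, and every step of the ladder argument needs more: the surjectivity of $H^0(X,L_{i_0})\to H^0(C,L_{i_0})$ comes from a vanishing theorem that requires $L_{i_0}-(K_X+\Theta_k+lS+M)$ to be ample (or at least nef and log big), and Conjecture \ref{effective non-vanishing} (via Lemma \ref{effective non-vanishing:g}) applied on the center $C$ requires $L_{i_0}|_C-(K_C+B_C+M_C)$ to be ample. If $C$ lies in a fiber of the semiample fibration of $-\omega$, both classes are numerically trivial on $C$ and neither ingredient is available. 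Passing to the contraction defined by $L_{i_0}$ alone, as you propose, does not repair this: it says nothing about the positivity of the remaining $L_i$ or of $-\omega$ along $C$, and it still leaves you needing a reduced member of the unmultiplied system $|L_{i_0}|$. The paper's fix is global rather than bundle-by-bundle: it runs the ladder argument only when $-\omega$ is ample, and reduces the nef case to that one by contracting $\varphi\colon X\to Y$ via the base point free theorem for qlc pairs applied to $-\omega$, so that $-\omega_Y$ is ample; each $L_i$, being nef and numerically trivial on the fibers of $\varphi$ (since $\sum a_iL_i=\varphi^*(-\omega_Y)$), descends to a nef line bundle on $Y$; the divisors are then constructed on $Y$ by the ample case, pulled back, and the log canonicity upstairs is recovered by inversion of adjunction for the fibration. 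Your induction cannot start without this reduction or some substitute for it.
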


\begin{proof} First we shall show the  proposition when $-(K_X+B+M)$ is ample. 
By Conjecture \ref{effective non-vanishing} and Lemma \ref{effective non-vanishing:g},we have some effective divisor $S_1  \sim_{\mathbb{Z}} L_1$. Let $l$ be the log canonical thresholds of $S_1$ for the pair $(X,B+M)$. We consider the case of  $a_1 \geq 1$.
We assume that  $l<1$.  Then 
$$L_1-(K_X+B+M+lS_1)\equiv   (1+a_1-l) S_1 + \sum_{i>1} a_i L_i$$
is still ample since $  1+a_1-l  >0.$ Let  $C$ be a minimal lc center of $(X,B+M+lS_1)$.
By  applying the  vanishing theorem  \cite[Theorem 6.3.4(2)]{fujino-book} cf.\cite{clx}, we obtain the surjectivity of 

$$
H^0(X, L_1) \to H^0(C, L_1).
$$
Note that this minimal lc center is a minimal qlc center from Lemma \ref{adding divisor} and \cite[Theorem 1.7]{fujino-slc fibraion}.
By the adjunction formula of \cite[Theorem 1.1]{fujino--hashizume}, we have $-(K_X+B+M+lS_1)|_C = -(K_C+B_C+M_C)$ for some $B_C+M_C$  such that  $(C,B_C+M_C )$ is generalized klt. Since $  a_1-l  >0,$ $-(K_C+B_C+M_C)$ is ample. 

By Conjecture \ref{effective non-vanishing} (cf. Lemma \ref{effective non-vanishing:g}) for $C$, $$H^0(C, L_1) \not =0.$$
Thus if we choose $S_1$ as general, $C$ must be contained in the base locus of $|L_1|$. Therefore this is the contradiction and it holds that $l=1$. Thus by induction on $\lfloor a_1 \rfloor$, we may assume that $a_1 <1$. Then, for the log canonical thresholds  $l$ of $S_1$ for the pair $(X,B+M)$, if $l<a_1$, we make the contradiction from the same arguments as the above. So $l\geq a_1$. Thus by induction on $r$, we see the desired proposition.  

Next, we discuss the general case. By the base point free theorem for quasi-log pairs (cf. \cite[Theorem 6.5.1]{fujino-book}),  we have the semi-ample fibration $\varphi: X \to Y$ of $-\omega.$ Indeed let $f:(Z,B_Z) \to X$ be a quasi-log structure of $(X, \omega)$. Then we have an ample $\mathbb{R}$-Cartier divisor $-\omega_Y$ such that $-\omega=\varphi^*(-\omega_Y).$   Thus the induced morphism  $ f \circ \varphi: (Z,B_Z) \to Y$ gives a quasi-log structure on $(Y, \omega_
{Y'}).$ Note that  we  give a generalized log canonical structure $(Y, B_Y+M_Y)$ on $\omega_Y$ from \cite[Theorem 1.7]{fujino-slc fibraion}.   Moreover, since $L_i \sim_{f} 0$ by the base point free theorem, we have some nef Cartier divisor $L_{i,Y}$ on $Y$ such that $L_i=\varphi^*L_{i,Y}$ for any $i$. 
Since $-\omega_Y$ is ample, we have 
a reduced divisor $S_{i,j,Y} \sim_{\mathbb{Z}} L_{i,Y} $ for $j=1, \cdots , \lceil a_i  \rceil$ and for any $i$ such that 
 $$(Y, B_Y+M_Y+\sum_{i} \sum_{j=1}^{\lfloor a_i \rfloor}S_{i,j,Y}+ \sum_{i}\{a_i\} S_{i, \lceil a_i  \rceil, Y})$$  is log canonical. Now since 
 $$K_X+B+M+\sum_{i} \sum_{j=1}^{\lfloor a_i \rfloor}S_{i,j}+ \sum_{i}\{a_i\} S_{i, \lceil a_i  \rceil}=\varphi^*(K_Y+B_Y+M_Y+\sum_{i} \sum_{j=1}^{\lfloor a_i \rfloor}S_{i,j,Y}+ \sum_{i}\{a_i\} S_{i, \lceil a_i  \rceil, Y})$$
by letting $S_{i,j}=\varphi^*S_{i,j,Y}$, we apply the inversion of adjunction for the fibration cf. \cite[Proposition 4.16]{filipazzi} and see that  $$(X, B+M+\sum_{i} \sum_{j=1}^{\lfloor a_i \rfloor}S_{i,j}+ \sum_{i}\{a_i\} S_{i, \lceil a_i  \rceil})$$  is log canonical.  Thus we prove the proposition. 
\end{proof}

\begin{corollary}\label{eff non-van dime 2}Let $[X,\omega]$ be a projective normal quasi-log canonical scheme such that $-\omega$ is nef with a generalized lc structure $(X, B+M)$ such  that $\omega\sim_{\mathbb{R}} K_X+B+M$ and $X$ is of Fano type.   Suppose that we have a decomposition 
 
 $$-(K_X+B+M) \equiv \sum a_i L_i,$$
 where $L_i$ are nef line bundles. If $X$ is a smooth  Fano $3$-fold  or $\mathrm{dim}\,X=2$, then there exists  a reduced divisor $S_{i,j} \sim_{\mathbb{Z}} L_i $ for $j=1, \cdots , \lceil a_i  \rceil$ and for any $i$ such that 
 $$(X, B+M+\sum_{i} \sum_{j=1}^{\lfloor a_i \rfloor}S_{i,j}+ \sum_{i}\{a_i\} S_{i, \lceil a_i  \rceil})$$  is log canonical. 

\end{corollary}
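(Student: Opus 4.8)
The plan is to re-run the proof of Proposition~\ref{choice of lc boundary} word for word, but to replace each appeal to Conjecture~\ref{effective non-vanishing} (made through Lemma~\ref{effective non-vanishing:g}) by the fact that the Ambro--Kawamata effective non-vanishing statement is \emph{unconditionally} a theorem in dimension at most three. Indeed, for a projective klt pair $(V,\Gamma)$ with $N$ nef and $N-(K_V+\Gamma)$ ample one has $H^0(V,N)\neq 0$ whenever $\dim V\le 1$ (Riemann--Roch on a curve), whenever $\dim V=2$ (Kawamata--Viehweg vanishing gives $h^0=\chi=\chi(\OO_V)+\tfrac12 N\cdot(N-K_V)$, which is positive for the Fano-type $V$ occurring here, where $\chi(\OO_V)=1$ and $N\cdot(N-K_V)\ge 0$), and whenever $\dim V=3$ by Kawamata's theorem in \cite{kawamata-effective}. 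So the first step is to record these known cases, and the real work is to check that under the hypothesis $\dim X=2$ or $X$ a smooth Fano $3$-fold, every variety on which non-vanishing is invoked has dimension $\le 3$.

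First I would inspect the ample case of Proposition~\ref{choice of lc boundary}: non-vanishing is used exactly twice, once on $X$ (to produce $S_1\sim_{\mathbb Z}L_1$) and once on a minimal lc center $C$ of $(X,B+M+lS_1)$ (to force the threshold $l$ to be $1$). Since a qlc center is never a component of $X$, we have $\dim C\le \dim X-1\le 2$, while $\dim X\le 3$ by assumption; moreover $-(K_C+B_C+M_C)$ is ample, so $C$ is of generalized Fano type and the surface/curve cases above apply cleanly. Via Lemma~\ref{rem-qklt} and the perturbation in Lemma~\ref{effective non-vanishing:g}, each generalized klt pair on $X$ or $C$ is replaced by an honest klt pair of the same dimension, so the cited low-dimensional non-vanishing applies directly; the inductions on $\lfloor a_1\rfloor$ and on $r$ do not alter the ambient variety and so introduce no new dimensions.

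Next I would treat the general (nef but not ample) case as in the Proposition: the base-point-free theorem for quasi-log schemes \cite[Theorem 6.5.1]{fujino-book} produces a contraction $\varphi:X\to Y$ with $-\omega=\varphi^*(-\omega_Y)$, $-\omega_Y$ ample, descended nef line bundles $L_{i,Y}$, and a generalized lc structure on $Y$; here $\dim Y\le 3$ and its qlc centers again have dimension $\le 2$, so the ample case just established applies unconditionally on $Y$. Pulling back the resulting divisors $S_{i,j,Y}$ by $\varphi$ and using inversion of adjunction for the fibration \cite[Proposition 4.16]{filipazzi} transports log canonicity from $Y$ back to $X$, which finishes the argument.

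The main obstacle is therefore not any new geometric input but the careful bookkeeping that the centers $C$ and the base $Y$ --- which may be singular and need not be smooth --- all lie in the range where effective non-vanishing is proved. This is exactly what the hypotheses buy us: $\dim X=2$ forces $\dim C\le 1$, where non-vanishing is elementary, while $X$ a smooth Fano $3$-fold forces $\dim C\le 2$, where non-vanishing for klt surfaces of Fano type is available and where Kawamata's threefold result \cite{kawamata-effective} covers $X$ itself. Because the known cases of effective non-vanishing hold for arbitrary projective klt pairs in these dimensions, no smoothness hypothesis on $C$ or $Y$ is needed beyond the Fano-type structure the reduction already provides, and the proof of Proposition~\ref{choice of lc boundary} goes through with Conjecture~\ref{effective non-vanishing} removed.
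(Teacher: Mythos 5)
Your proposal takes essentially the same route as the paper: the paper's entire proof of this corollary is the observation that the instances of Conjecture~\ref{effective non-vanishing} invoked in Proposition~\ref{choice of lc boundary} are unconditional theorems in the relevant dimensions, namely \cite[Theorem 5.1]{kawamata-effective} in dimension $2$ and \cite[Theorem 1.3]{BH} for smooth Fano $3$-folds, and your dimension bookkeeping (the qlc centers $C$ have dimension $\le 2$, where the surface theorem and Riemann--Roch on curves suffice) is exactly what makes that one-line proof work. One correction, though: your claim that effective non-vanishing holds ``whenever $\dim V=3$ by Kawamata's theorem in \cite{kawamata-effective}'' is not accurate --- the three-dimensional case of the Ambro--Kawamata conjecture for arbitrary projective klt pairs is still open, and Kawamata's paper proves only the surface case in full generality. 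The three-dimensional instance you actually need, non-vanishing on $X$ itself when $X$ is a smooth Fano $3$-fold, is the Broustet--H\"oring theorem, and this is precisely why the corollary restricts to smooth Fano $3$-folds rather than to arbitrary threefolds; with that citation repaired your argument coincides with the paper's. (A residual subtlety, present in the paper's proof as well as yours, is that in the nef-but-not-ample case the base $Y$ of the semiample fibration of $-\omega$ could be a singular three-dimensional variety of Fano type, which is not literally covered by either cited theorem.)
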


\begin{proof}
When $\mathrm{dim}\, X=2$, the fully general case of Conjecture \ref{effective non-vanishing} is known in \cite[Theorem 5.1]{kawamata-effective}. And for smooth Fano $3$-folds, it is known by \cite[Theorem 1.3]{BH}.
\end{proof}

\begin{remark}\label{glc}  Lemma \ref{effective non-vanishing:g}, Proposition \ref{choice of lc boundary}, and Corollary \ref{eff non-van dime 2} can be proved for generalized log canonical pairs by using the results in \cite{filipazzi} and the recent preprint \cite{clx}. 
 
\end{remark}

\section{Proof of Theorem \ref{K implies M}}

\begin{proof}We may assume that $\mathrm{dim}\,X + \rho_X - \tau_X \leq 0.$ By Proposition \ref{choice of lc boundary}, we obtain   a reduced divisor $S_{i,j} \sim_{\mathbb{Z}} L_i $ for $j=1, \cdots , a_i  $ and for any $i$ such that 
 $$(X, \sum_{i} \sum_{j=1}^{ a_i }S_{i,j} )$$  is log canonical. Let $\Delta=\sum_{i} \sum_{j=1}^{a_i }S_{i,j}$. Then the global complexity of $(X, \Gamma)$ is not less than $\mathrm{dim}\,X + \rho_X$. Thus by \cite{bmsz},
 $$(X, \sum_{i} \sum_{j=1}^{ a_i }S_{i,j})$$ is a Toric pair and 
 $$\mathrm{dim}\,X + \rho_X - \tau_X = 0.$$ Since $\sum a_i= \mathrm{dim}\,X + \rho_X$, any torus invariant divisors appear in $\{S_{i,j} \} $. Thus we conclude every effective divisor is nef on $X$. Such a smooth Toric variety is only a product of projective spaces (cf.\cite[Proposition 5.3]{fs}).
 \end{proof}

\end{document}